\theoremstyle{plain}
\newtheorem{thm}{Theorem}[section]
\newtheorem{lem}[thm]{Lemma}
\theoremstyle{definition}
\newtheorem{ntt}[thm]{}
\newtheorem{cor}[thm]{Corollary}
\newtheorem{rem}[thm]{Remark}
\newcommand{\ff}{\mathbb{F}}
\newcommand{\E}{\mathrm{E}}
\newcommand{\F}{\mathrm{F}_4}
\newcommand{\G}{\mathrm{G}_2}
\newcommand{\BG}{\overline{G}}
\newcommand{\w}{\bar{\omega}}
\DeclareMathOperator{\PGL}{\mathrm{PGL}}
\DeclareMathOperator{\PGO}{\mathrm{PGO}}
\DeclareMathOperator{\PGSp}{\mathrm{PGSp}}
\DeclareMathOperator{\CH}{\mathrm{CH}}
\DeclareMathOperator{\Ch}{\mathrm{Ch}}
\DeclareMathOperator{\CHO}{\overline{\CH}}
\DeclareMathOperator{\ChO}{\overline{\Ch}}
\DeclareMathOperator{\charact}{\mathrm{char}} 
\title{ADDENDUM TO:\\
Generically split projective homogeneous varieties}
\author{Viktor Petrov, Nikita Semenov\footnote{The authors gratefully acknoweledge the hospitality and support of the
Max-Planck Institute for Mathematics, Bonn.}}
\date{}
\begin{document}
\maketitle

\begin{abstract}
In this addendum we generalize some results of our article \cite{PS10}.
More precisely, we remove all restrictions on the characteristic of the base
field (in \cite{PS10} we assumed that the characteristic is different
from any torsion prime of the group), and complete our classification
by the last missing case, namely $\PGO_{2n}^+$. We follow our notation
from \cite{PS10}.
\end{abstract}

\section{Chow rings of reductive groups}\label{sec2}

\begin{ntt}
Let $G_0$ be a split reductive algebraic group defined over a field $k$.
We fix a split maximal torus $T$ in $G_0$ and a Borel subgroup $B$ of $G_0$
containing $T$ and defined over $k$. 
We denote by $\Phi$ the root system of $G_0$,
by $\Pi$ the set of simple roots of $\Phi$
with respect to $B$, and by $\widehat T$ the group of characters of $T$.
Enumeration of simple roots follows Bourbaki.

Any projective $G_0$-homogeneous variety $X$
is isomorphic to $G_0/P_\Theta$, where $P_\Theta$ stands for the (standard) parabolic
subgroup corresponding to a subset $\Theta\subset\Pi$. As $P_i$ we denote the
maximal parabolic subgroup $P_{\Pi\setminus\{\alpha_i\}}$ {\it of type $i$}.
\end{ntt}

Consider the characteristic map $c\colon S(\widehat T)\to\CH^*(G_0/B)$
from the symmetric algebra of $\widehat T$ to the Chow ring of $G_0/B$ given
in \cite[2.7]{PS10}, and denote
its image by $R^*$.
According to \cite[Rem.~$2^\circ$]{Gr58}, the ring $\CH^*(G_0)$ can be presented as
the quotient of $\CH^*(G_0/B)$ modulo the ideal generated by the
non-constant elements of $R^*$.

\begin{lem}\label{lem:comm} The pull-back map
$$
\CH^*(G_0)\to\CH^*([G_0,G_0])
$$
is an isomorphism.
\end{lem}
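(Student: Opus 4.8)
The plan is to show that $G_0$ is isomorphic, as a $k$-variety, to the product of its derived subgroup by a split torus, and then to compare Chow rings by a routine localization argument. Put $G':=[G_0,G_0]$; it is a semisimple $k$-group, and it is split because $T':=T\cap G'$ is a maximal torus of $G'$ which, being a closed subtorus of the split torus $T$, is itself split. Let $\pi\colon G_0\to S:=G_0/G'$ be the quotient map onto the abelianization. Since $G_0=G'\cdot Z(G_0)^{\circ}$ with $Z(G_0)^{\circ}\subseteq T$, we have $G_0=G'\cdot T$, so $\pi|_T\colon T\to S$ is surjective and $S$ is a split torus. I would then produce a homomorphic section of $\pi$: the closed immersion $T'\hookrightarrow T$ gives a short exact sequence of character lattices $0\to\widehat S\to\widehat T\to\widehat{T'}\to 0$ (identifying $\widehat S$ with $\widehat{T/T'}$ via $T/T'\xrightarrow{\sim}G_0/G'=S$), which splits because $\widehat{T'}$ is a free abelian group; dualizing a retraction $\widehat T\to\widehat S$ yields a homomorphism $\sigma\colon S\to T\hookrightarrow G_0$ with $\pi\circ\sigma=\operatorname{id}_S$.

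Given $\sigma$, the morphism $\phi\colon G'\times S\to G_0$, $(g,s)\mapsto g\,\sigma(s)$, is an isomorphism of $k$-varieties: it has inverse $x\mapsto\bigl(x\,\sigma(\pi(x))^{-1},\,\pi(x)\bigr)$, which indeed lands in $G'\times S$ since $\pi\bigl(x\,\sigma(\pi(x))^{-1}\bigr)=e$; and $\phi$ restricts on $G'\times\{e\}$ to the inclusion $\iota\colon G'\hookrightarrow G_0$ because $\sigma(e)=e$. Now $S\cong\mathbb{G}_m^{\dim S}$ as a variety, and from $\CH^*(\mathbb{A}^1)=\mathbb{Z}$, homotopy invariance, and the localization sequence one sees that for any variety $Y$ the first projection $Y\times\mathbb{G}_m\to Y$ induces an isomorphism on Chow rings; iterating, $\operatorname{pr}^*\colon\CH^*(G')\xrightarrow{\sim}\CH^*(G'\times S)$, where $\operatorname{pr}$ denotes the first projection. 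Writing $j\colon G'\hookrightarrow G'\times S$ for $g\mapsto(g,e)$, we have $\operatorname{pr}\circ j=\operatorname{id}_{G'}$ and $\phi\circ j=\iota$, hence $\iota^*=j^*\circ\phi^*=(\operatorname{pr}^*)^{-1}\circ\phi^*$ is a composite of two isomorphisms, so the pull-back $\iota^*\colon\CH^*(G_0)\to\CH^*(G')$ is an isomorphism.

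The only genuinely non-formal step is the construction of the section $\sigma$ — equivalently, the splitting of $G_0$ as a variety — together with the check that $\phi$ is compatible with the group inclusion, so that the induced map on Chow rings really is $\iota^*$; everything else follows from standard properties of Chow groups. Alternatively, one can argue from the presentation recalled just before the lemma: $G_0/B\cong G'/(B\cap G')$, and the characteristic map of $G_0$ annihilates $\widehat S$ (a character of $S$ defines a trivial line bundle on $G_0/B$), so it factors through the characteristic map of $G'$ and has the same image $R^*$; consequently the two ideals generated by the non-constant elements of $R^*$ correspond under $\CH^*(G_0/B)\cong\CH^*(G'/(B\cap G'))$, and the quotients — i.e. $\CH^*(G_0)$ and $\CH^*(G')$ — agree.
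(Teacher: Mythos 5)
Your main argument is correct, but it takes a genuinely different route from the paper's one-line proof. The paper works entirely inside the formalism recalled just before the lemma: since $B'=B\cap[G_0,G_0]$ is a Borel subgroup of $G':=[G_0,G_0]$ with $G'/B'\xra{\sim}G_0/B$, and since the characteristic map $S(\widehat T)\to\CH^*(G_0/B)$ factors through the surjection $S(\widehat T)\to S(\widehat{T'})$, the two characteristic maps have the same image $R^*$; Grothendieck's presentation of $\CH^*(G_0)$ as $\CH^*(G_0/B)$ modulo the ideal generated by the non-constant elements of $R^*$ then gives the isomorphism at once. You instead split off the radical geometrically: a retraction of $\widehat S\hookrightarrow\widehat T$ (which exists because $\widehat{T'}$ is free) dualizes to a section $\sigma$ of $G_0\to S$, yielding an isomorphism of varieties $G'\times S\cong G_0$ compatible with the inclusion $G'\hookrightarrow G_0$, after which $\mathbb{G}_m$-invariance of Chow groups (localization plus homotopy invariance) finishes the job. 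Your construction is sound --- in particular $T\cap G'$ really is a split maximal torus of $G'$, the character sequence is exact because the character functor is exact on diagonalizable groups, and your explicit inverse for $\phi$ checks out --- and it has the virtue of being self-contained, not invoking \cite{Gr58}; on the other hand it relies on the specific computation $\CH^*(Y\times\mathbb{G}_m)\cong\CH^*(Y)$, whereas the paper's argument stays within the $G_0/B$-machinery on which the rest of the section is built. Your closing ``alternative'' paragraph, including the observation that a character extending to $G_0$ gives a trivial line bundle on $G_0/B$ so that the characteristic map kills $\widehat S$, is essentially the paper's own proof.
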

\begin{proof}
Indeed, $B'=B\cap [G_0,G_0]$ is a Borel subgroup of $[G_0,G_0]$, the map
$$
[G_0,G_0]/B'\to G_0/B
$$
is an isomorhism, and the map $S(\widehat T)\to\CH^*(G_0/B)$ factors through
the surjective map $S(\widehat T)\to S(\widehat T')$, where
$T'=T\cap [G_0,G_0]$.
\end{proof}

Let $P$ be a parabolic subgroup of $G_0$. Denote by $L$ the Levi subgroup
of $P$ and set $H_0=[L,L]$. We have

\begin{lem}\label{lem:levi}
The pull-back map
$$
\CH^*(P)\to\CH^*(H_0)
$$
is an isomorphism.
\end{lem}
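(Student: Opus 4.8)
The plan is to factor the pull-back map along the closed embedding $H_0\hookrightarrow P$ through $\CH^*(L)$, using the chain of inclusions $H_0\hookrightarrow L\hookrightarrow P$, and to show separately that each of the maps $\CH^*(P)\to\CH^*(L)$ and $\CH^*(L)\to\CH^*(H_0)$ is an isomorphism.

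First I would handle $\CH^*(P)\to\CH^*(L)$. Write $P=L\ltimes U$ for the Levi decomposition, where $U$ denotes the unipotent radical of $P$. Since $U$ is a split unipotent group, its underlying variety is an affine space, and the product map $L\times U\to P$ is an isomorphism of varieties; under this identification the projection $\pi\colon P\to L$ becomes the projection $L\times U\to L$. Hence $\pi^*\colon\CH^*(L)\to\CH^*(P)$ is an isomorphism by homotopy invariance of Chow groups. As the inclusion $L\hookrightarrow P$ is a section of $\pi$, the pull-back $\CH^*(P)\to\CH^*(L)$ is inverse to $\pi^*$, hence also an isomorphism.

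For the second map, note that $L$ is itself a split reductive group defined over $k$: it contains the split maximal torus $T$, and its root system is the subsystem of $\Phi$ generated by the simple roots defining $P$. Its derived subgroup is $H_0=[L,L]$, so Lemma~\ref{lem:comm}, applied to $L$ in place of $G_0$, shows that $\CH^*(L)\to\CH^*(H_0)$ is an isomorphism. Composing the two isomorphisms gives the assertion. I do not anticipate any genuine obstacle here; the only points to be invoked with care are the structure-theoretic facts that the unipotent radical of a parabolic is split unipotent with underlying variety an affine space and that the Levi decomposition is a variety isomorphism — which is what makes homotopy invariance applicable — together with the verification that Lemma~\ref{lem:comm} applies to $L$.
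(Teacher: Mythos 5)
Your proposal is correct and follows essentially the same route as the paper: both arguments show that $\CH^*(L)\to\CH^*(P)$ is an isomorphism via the affine-fibration structure of $P\to L$ (the paper phrases this as a Zariski locally trivial affine fibration, you make it explicit via $P\cong L\times U$), invert it using the section $L\hookrightarrow P$, and then conclude by applying Lemma~\ref{lem:comm} to the reductive group $L$ with $[L,L]=H_0$.
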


\begin{proof}
The quotient map $P\to L$ is Zariski locally trivial affine fibration,
therefore the pull-back map $\CH^*(L)\to\CH^*(P)$ is an isomorphism.
Since the composition $L\to P\to L$ is the identity map, the pull-back
map $\CH^*(P)\to\CH^*(L)$ is an isomorphism as well. It remains to apply
Lemma~\ref{lem:comm}.
\end{proof}

\begin{lem}\label{lem:surj}
The pull-back map
$$
\CH^*(G_0)\to\CH^*(P)
$$
is surjective.
\end{lem}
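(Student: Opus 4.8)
The plan is to factor the pull-back $\CH^*(G_0)\to\CH^*(P)$ through the Chow ring of the big cell attached to $P$, exhibiting it as a surjection followed by an isomorphism; nothing in this argument sees the characteristic of $k$, which is what lets us drop the torsion-prime hypothesis.

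First I would set up the big cell. Let $P^-$ be the parabolic subgroup opposite to $P$ with respect to the Levi $L$, and let $U^-=R_u(P^-)$ be its unipotent radical. The multiplication map $m\colon U^-\times P\to G_0$, $(u,p)\mapsto up$, is an open immersion; denote its image by $\Omega\subseteq G_0$. This is the standard big-cell decomposition, valid over $\mathbb Z$ and hence over $k$ in every characteristic, and it is precisely the input needed here. Via $m$, the second projection yields a morphism $q\colon\Omega\cong U^-\times P\xrightarrow{\mathrm{pr}_2}P$, the closed immersion $i'\colon P\hookrightarrow\Omega$, $p\mapsto e\cdot p$, satisfies $q\circ i'=\mathrm{id}_P$, and writing $j\colon\Omega\hookrightarrow G_0$ for the open immersion the inclusion $i\colon P\hookrightarrow G_0$ factors as $i=j\circ i'$.

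Then the argument runs as follows. Since $G_0$ is split, $U^-$ is a split unipotent group, hence isomorphic as a variety to an affine space; therefore $q$ is a trivial affine bundle and $q^*\colon\CH^*(P)\to\CH^*(\Omega)$ is an isomorphism by homotopy invariance. The relation $q\circ i'=\mathrm{id}_P$ then forces $(i')^*$ to be the inverse of $q^*$, in particular an isomorphism. On the other hand, $j^*\colon\CH^*(G_0)\to\CH^*(\Omega)$ is surjective, being the restriction to the open subscheme $\Omega$ (localization sequence for the closed complement $G_0\setminus\Omega$). Consequently
$$
i^*=(i')^*\circ j^*\colon\CH^*(G_0)\longrightarrow\CH^*(P)
$$
is the composition of a surjection with an isomorphism, hence surjective. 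The only points that require care — and the only places where the characteristic could conceivably intervene — are that $m$ is an open immersion and that $U^-$ is variety-isomorphic to an affine space; both are standard facts from the structure theory of split reductive groups, valid in all characteristics, so I do not expect any genuine obstacle. (Alternatively, one could first replace $P$ by $L$ and then by $H_0$ using Lemma~\ref{lem:levi}, but the big-cell argument makes this unnecessary.)
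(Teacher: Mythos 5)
Your argument is correct, but it takes a genuinely different route from the paper. The paper stays on the level of flag varieties: it applies Grothendieck's result \cite[Proposition~3]{Gr58} to the fibration $G_0/B\to G_0/P$ to get surjectivity of $\CH^*(G_0/B)\to\CH^*(P/B)$, and then descends to the groups using the presentation of $\CH^*(G_0)$ as a quotient of $\CH^*(G_0/B)$ together with Lemma~\ref{lem:levi}. You instead work directly on the group via the big cell $\Omega=R_u(P^-)\cdot P\cong U^-\times P$: the open immersion $j\colon\Omega\hookrightarrow G_0$ gives surjectivity of $j^*$ by localization, homotopy invariance identifies $\CH^*(\Omega)$ with $\CH^*(P)$, and the section $p\mapsto e\cdot p$ makes the composite equal to the pull-back along $P\hookrightarrow G_0$. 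All the ingredients you invoke (the product map $U^-\times P\to G_0$ being an open immersion, $U^-$ being variety-isomorphic to affine space, localization, homotopy invariance, and functoriality of pull-backs between smooth varieties) are standard and characteristic-free, so the proof is complete. What each approach buys: yours is more self-contained and elementary, needing neither \cite{Gr58} nor Lemma~\ref{lem:levi}; the paper's fits more economically into its own framework, since the flag varieties, the characteristic map, and Lemma~\ref{lem:levi} are all needed elsewhere in the argument anyway (e.g., in the proof of Theorem~\ref{thm1}, where the same fibration argument is reused with $Y$ and $X$ in place of $G_0/B$ and $G_0/P$ --- a reuse your big-cell argument would not directly provide, since the twisted forms have no rational base point).
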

\begin{proof}
Applying \cite[Proposition~3]{Gr58} to the natural map $G_0/B\to G_0/P$
we see that the map $\CH^*(G_0/B)\to\CH^*(P/B)$ is surjective. But the map
$\CH^*(P/B)\to\CH^*(P)$ is also surjective by Lemma~\ref{lem:levi} and fits
into the commutative diagram
$$
\xymatrix{
\CH^*(G_0/B)\ar@{>>}[r]\ar[d]&\CH^*(P/B)\ar@{>>}[d]\\
\CH^*(G_0)\ar[r]&\CH^*(P).
}
$$
\end{proof}

\begin{ntt}[Definition of $\sigma$]
Now we restrict to the situation when $G_0$ is simple. Let $p$ be a prime integer.
Denote $\Ch^*(-)$ the Chow ring with $\ff_p$-coefficients.
Explicit presentations of the Chow rings with $\ff_p$-coefficients of split semisimple algebraic groups
are given in \cite[Theorem~3.5]{Kc85}.

For $G_0$ and $H_0$ they look as follows:
\begin{align*}                                               
&\Ch^*(G_0)=\ff_p[x_1,\ldots,x_r]/(x_1^{p^{k_1}},\ldots,x_r^{p^{k_r}})
\text{ with }\deg x_i=d_i, 1\le d_1\le\ldots\le d_r;\\
&\Ch^*(H_0)=\ff_p[y_1,\ldots,y_s]/(y_1^{p^{l_1}},\ldots,
y_{s}^{p^{l_s}})\text{ with }\deg y_m=e_m, 1\le e_1\le\ldots\le e_s
\end{align*}
for some integers $k_i$, $l_i$, $d_i$, and $e_i$ depending on the
Dynkin types of $G_0$ and $H_0$.

By the previous lemmas the pull-back $\varphi\colon\Ch^*(G_0)\to\Ch^*(H_0)$
is surjective. For a graded ring $S^*$ denote by $S^+$ the ideal
generated by the non-constant elements of $S^*$.
The induced map
$$\Ch^+(G_0)/\Ch^+(G_0)^2\to\Ch^+(H_0)/\Ch^+(H_0)^2$$
is also surjective.
Moreover, for any $m$ with $e_m>1$ there exists a unique $i$ such that $d_i=e_m$.
We denote $i=:\sigma(m)$. The surjectivity implies that
$$
\varphi(x_{\sigma(m)})=cy_m+\text{lower terms},\quad c\in\ff_p^\times.
$$
\end{ntt}

\section{Generically split varieties}

For a semisimple group $G$ and a prime number $p$
denote by
$$J_p(G)=(j_1(G),\ldots,j_r(G))$$
its {\it $J$-invariant} defined in \cite{PSZ08}.

\begin{thm}\label{thm1}
Let $G_0$ be a split simple algebraic group over $k$,
$G={}_\gamma G_0$ be the twisted form of $G_0$ 
given by a $1$-cocycle $\gamma\in H^1(k,G_0)$,
$X={}_\gamma(G_0/P)$ be the twisted form of $G_0/P$,
and $Y={}_\gamma(G_0/B)$ be the twisted form of $G_0/B$.
The following conditions are equivalent:
\begin{enumerate}
\item $X$ is generically split;

\item The composition map
$$
\CHO^*(Y)\to\CH^*(G_0)\to\CH^*(P)
$$
is surjective;

\item For every prime $p$ the composition map
$$
\ChO^1(Y)\to\Ch^1(G_0)\to\Ch^1(P)
$$
is surjective, and
$$
j_{\sigma(m)}(G)=0\text{ for all }m\text{ with }d_m>1.
$$
\end{enumerate}
\end{thm}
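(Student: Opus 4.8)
The plan is to establish the cycle of implications $(1)\Rightarrow(2)\Rightarrow(3)\Rightarrow(1)$, reducing each to statements about the motive of $Y$ and the $J$-invariant machinery of \cite{PSZ08}. The key geometric input is the characterization of generic splitting in terms of the Chow ring: $X$ is generically split precisely when $X$ splits over its own function field, i.e. the natural map $\CH^*(G_0/P)\to\CH^*(X_{k(X)})$ (equivalently $\CH^*(\bar X)\to \CH^*(X_{k(X)})$) has image generated by restrictions of cycles defined over $k$. We want to re-express this via the variety $Y$, using that $Y\to X$ is a projective homogeneous fibration with split fibers and that $\CH^*(G_0)$ is the quotient of $\CH^*(G_0/B)$ by the nonconstant part of the image $R^*$ of the characteristic map (Grothendieck), with $\CH^*(P)$ a further quotient — precisely the content of the composition in $(2)$.

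For $(1)\Rightarrow(2)$: if $X$ is generically split then $X_{k(X)}$ is split, so $Y_{k(X)}\cong (G_0/B)_{k(X)}$ and the Chow ring $\CH^*((G_0/B)_{k(X)})$ is generated by Chow-theoretically rational cycles pulled back from $Y$; one then chases through the Grothendieck presentation to see that the rational cycles on $\bar Y=G_0/B$ surject onto $\CH^*(G_0/B)\big/(R^+) = \CH^*(G_0)$ and hence onto $\CH^*(P)$ via Lemma~\ref{lem:surj}. Here $\CHO^*(Y)$ denotes the image of $\CH^*(Y)\to\CH^*(\bar Y)$, the subring of rational cycles; the point is that generic splitting forces enough rationality. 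For $(2)\Rightarrow(3)$: reduce modulo $p$ for each prime $p$; surjectivity of the integral composition gives surjectivity of $\ChO^1(Y)\to\Ch^1(P)$ in degree $1$, and in higher degrees surjectivity onto $\Ch^*(P)$ combined with the explicit K\v{c}-style presentations $\Ch^*(G_0)=\ff_p[x_1,\dots,x_r]/(x_i^{p^{k_i}})$ and $\Ch^*(H_0)=\ff_p[y_1,\dots,y_s]/(y_m^{p^{l_m}})$, together with Lemma~\ref{lem:levi} identifying $\Ch^*(P)\cong\Ch^*(H_0)$, forces each generator $y_m$ with $e_m=d_m>1$ to be hit; by definition of $\sigma$ and the formula $\varphi(x_{\sigma(m)})=cy_m+\text{lower terms}$, the rationality of a cycle mapping to $x_{\sigma(m)}$ in $\Ch^*(G_0)$ translates exactly into the vanishing $j_{\sigma(m)}(G)=0$, because $j_{\sigma(m)}(G)$ measures the smallest power of $x_{\sigma(m)}$ that becomes rational on $Y$. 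For $(3)\Rightarrow(1)$: assuming the degree-$1$ surjectivity and the vanishing of all the relevant $J$-invariant components, one argues that the rational cycles on $\bar Y$ generate $\CH^*(G_0)$ after reduction mod every $p$ and integrally, hence the restriction map to $Y_{k(X)}$ is surjective, which is equivalent to $X_{k(X)}$ being split, i.e. $X$ generically split; this uses that a projective homogeneous variety is split iff its Chow motive is split iff all $J$-invariant components vanish over the relevant field, applied over $k(X)$.

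I expect the main obstacle to be the careful bookkeeping in $(2)\Leftrightarrow(3)$: one must show that the \emph{integral} surjectivity onto $\CH^*(P)$ is equivalent to the conjunction of the mod-$p$ degree-$1$ condition and the vanishing of exactly the components $j_{\sigma(m)}(G)$ with $d_m>1$, with no stray conditions. This requires knowing precisely which generators $x_i$ of $\Ch^*(G_0)$ survive in $\Ch^*(P)\cong\Ch^*(H_0)$ — namely those indexed by $\sigma(m)$ — and that the degree-$1$ generators (the $x_i$ with $d_i=1$) are handled separately by the $\ChO^1(Y)\to\Ch^1(P)$ condition, which is why the $J$-invariant condition is only imposed for $d_m>1$. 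The other delicate point is the characteristic-free treatment: since we no longer assume $\charact k$ avoids the torsion primes, one cannot invoke the simpler Chow ring presentations valid in the coprime case, and must instead work with the general K\v{c} presentations from \cite{Kc85} and the general $J$-invariant formalism of \cite{PSZ08}, checking that all the functoriality statements (Lemmas~\ref{lem:comm}, \ref{lem:levi}, \ref{lem:surj}) and the definition of $\sigma$ go through — which is precisely why those lemmas were set up in the generality above. The implications $(1)\Rightarrow(2)$ and $(3)\Rightarrow(1)$, by contrast, should be relatively formal consequences of the standard dictionary between generic splitting, splitting of the motive, and rationality of cycles on $\bar Y$.
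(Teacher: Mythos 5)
Your overall strategy (the cycle $1\Rightarrow 2\Rightarrow 3\Rightarrow 1$) coincides with the paper's, and your sketch of $2\Rightarrow 3$ is essentially the paper's argument. However, both outer implications as you describe them contain genuine errors.

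In $1\Rightarrow 2$ you claim that generic splitting forces the rational cycles on $\overline{Y}$ to surject onto $\CH^*(G_0/B)/(R^+)=\CH^*(G_0)$, and you deduce (2) from that. This intermediate claim is false: surjectivity of $\CHO^*(Y)\to\CH^*(G_0)$ is essentially the triviality of the $J$-invariant of $G$, which condition (1) does not give. For instance, for $P=B$ the variety $X=Y$ is always generically split (for an inner form), yet $\CHO^*(Y)\to\CH^*(G_0)$ fails to be surjective whenever $J(G)\ne 0$; condition (2) is nevertheless vacuous there because $\CH^*(B)$ is trivial. You also appear to conflate rationality over $k$ (which is what $\CHO^*(Y)$ records) with rationality over $k(X)$. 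The correct argument, as in Lemma~\ref{lem:surj}, applies Grothendieck's generic-fibre surjectivity \cite[Proposition~3]{Gr58} to the fibration $Y\to X$: since $G_{k(X)}$ is split, the generic fibre is a split form of $P/B$, so $\CHO^*(Y)$ surjects onto $\CH^*(P/B)$, and one composes with the surjection $\CH^*(P/B)\to\CH^*(P)$ coming from Lemma~\ref{lem:levi}.

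The more serious gap is in $3\Rightarrow 1$. You assert that (3) implies that the rational cycles generate $\CH^*(G_0)$ modulo every $p$, and that surjectivity of restriction to $Y_{k(X)}$ is ``equivalent to $X_{k(X)}$ being split.'' Neither holds: condition (3) controls only the components $j_{\sigma(m)}(G)$, i.e.\ only those generators of $\Ch^*(G_0)$ which survive in $\Ch^*(P)\cong\Ch^*(H_0)$, and says nothing about the remaining $j_i(G)$ (which are typically nonzero for generically split $X$ with $G$ nonsplit, e.g.\ anisotropic groups of type $\G$). Moreover the statement to be proved is that $G_{k(X)}$ is split, not that $X_{k(X)}$ is ($X_{k(X)}$ always has a rational point). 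The missing idea is the passage to the semisimple anisotropic part: over $k(X)$ the group acquires a parabolic subgroup of type $P$, whose Levi subgroup has derived group $H$, and $G_{k(X)}$ is split iff $H$ is. One then bounds $j_m(H)\le j_{\sigma(m)}(G)=0$ for the higher-degree generators via the compatibility of the flag variety $Z$ of $H$ with $Y_{k(X)}$ and of $\Ch^*(\overline{H})$ with $\Ch^*(\BG)$, handles degree $1$ separately using the hypothesis on $\ChO^1(Y)$, and concludes that $H$ is split by \cite[Proposition~3.9(3)]{PS10}. Without this reduction to $H$ the vanishing hypotheses in (3) cannot be brought to bear on the splitting of $G_{k(X)}$.
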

\begin{proof}
1$\Rightarrow$2. The same argument as in the proof of Lemma~\ref{lem:surj}
(with $Y$ instead of $G_0/B$ and $X$ instead of $G_0/P$).

2$\Rightarrow$3. Clearly, the composition
$$
\ChO^*(Y)\to\Ch^*(G_0)\to\Ch^*(P)
$$
is surjective for every $p$. In particular, when $d_m>1$ $\ChO^{d_m}(Y)$
contains an element of the form $x_{\sigma(m)}+a$, where $a$ is decomposable,
hence $j_{\sigma(m)}(G)=0$.

3$\Rightarrow$1. $G_{k(X)}$ has a parabolic subgroup of type $P$; denote
the derived group of its Levi subgroup by $H$. We want to prove that $H$
is split. By \cite[Proposition~3.9(3)]{PS10} it suffices to show that
$J_p(H)$ is trivial for every $p$. 

Denote the variety of complete flags of $H$ by $Z$.
It follows from the commutative diagram
$$
\xymatrix{
\Ch^*(Y_{k(X)})\ar[r]\ar[d]&\Ch^*(Z)\ar[d]\\
\Ch^*(\BG)\ar[r]&\Ch^*(\overline{H})
}
$$
that $j_m(H)\le j_{\sigma(m)}(G)$ if $d_m>1$.
Therefore
$$
j_m(H)\le j_{\sigma(m)}(G_{k(X)})\le j_{\sigma(m)}(G)=0
$$
when $d_m>1$. It remains to show that $\Ch^1(\overline{Z})$
is rational.
But this follows from the commutative diagram
$$
\xymatrix{
\Ch^1(Y)\ar[r]&\Ch^1(Y_{k(X)})\ar[r]\ar[d]&\Ch^1(Z)\ar[d]&\\
&\Ch^1(\BG)\ar[r]&\Ch^1(\overline{H})\ar@{=}[r]&\Ch^1(P).
}
$$
\end{proof}

\begin{rem}\mbox{}
\begin{itemize}
\item If all $e_m>1$, then the condition on $\ChO^1(Y)$ is void.
\item If $G_0$ is different from $\PGO_{2n}^+$ and $e_1=1$
(resp. $G_0=\PGO_{2n}^+$ and $e_1=e_2=1$), then
in view of \cite[Proposition~4.2]{PS10} it is equivalent
to the fact that all Tits algebras of $G$ are split. The latter is
also equivalent to the fact that $j_1(G)=0$ (resp. $j_1(G)=j_2(G)=0$).
\item If $G_0=\PGO_{2n}^+$ and there is exactly one $m$ with $e_m=1$, then
there are exactly two fundamental weights among $\w_1,\w_{n-1},\w_n$
whose image with respect to the composition
$\Ch^1(\overline Y)\to\Ch^1(\overline G)\to\Ch^1(\overline H)$
equals $y_1$.
Then the condition on $\ChO^1(Y)$ is equivalent to the fact that
at least one of the Tits algebras corresponding to these fundamental weights
in the preimage of $y_1$ is split.
\end{itemize}
\end{rem}

For a simple group $G$ we denote by $A_l$ its Tits algebra corresponding to $\w_l$.

\begin{thm}\label{cor1}
Let $G$ be a group given by a $1$-cocycle from $H^1(k,G_0)$, where $G_0$
stands for the split adjoint group of the same type as $G$,
and let $X$ be the variety of the parabolic subgroups of $G$ of type $i$.

The variety $X$ is generically split if and only if

\medskip

\begin{longtable}{l|l|l}
$G_0$&$i$&conditions on $G$\\\hline
$\PGL_n$&any $i$ & $\gcd(\mathrm{exp} A_1,i)=1$\\
$\PGSp_{2n}$& any $i$ &$i$ is odd or $G$ is split\\
$\mathrm{O}^+_{2n+1}$&any $i$&$j_m(G)=0$ for all $1\le m\le\big[\frac{n+1-i}{2}\big]$\\\hline
$\PGO^+_{2n}$&$i$ is odd, $i<n-1$&$[A_{n-1}]=0$ or $[A_n]=0$, and\\
&&$j_m(G)=0$ for all $2\le m\le\big[\frac{n+2-i}{2}\big]$\\\hline
$\PGO^+_{2n}$&$i$ is even, $i<n-1$&$j_m(G)=0$ for all $1\le m\le\big[\frac{n+2-i}{2}\big]$\\
$\PGO^+_{2n}$&$i=n-1$ or $i=n$, $n$ is odd&none\\
$\PGO^+_{2n}$&$i=n-1$, $n$ is even&$[A_1]=0$ or $[A_n]=0$\\
$\PGO^+_{2n}$&$i=n$, $n$ is even&$[A_1]=0$ or $[A_{n-1}]=0$\\\hline
$\E_6$&$i=3,5$&none\\
$\E_6$&$i=2,4$&$J_3(G)=(0,*)$\\
$\E_6$&$i=1,6$&$J_2(G)=(0)$\\
$\E_7$&$i=2,5$&none\\
$\E_7$&$i=3,4$&$J_2(G)=(0,*,*,*)$\\\hline
$\E_7$&$i=6$&$J_2(G)=(0,0,*,*)$\\
&&($J_2(G)=(0,0,0,0)$ if $\charact k\ne 2$)\\\hline
$\E_7$&$i=1$&$J_2(G)=(0,0,0,*)$\\
&&($J_2(G)=(0,0,0,0)$ if $\charact k\ne 2$)\\\hline
$\E_7$&$i=7$&$J_3(G)=(0)$ and $J_2(G)=(*,0,*,*)$\\
&&($J_2(G)=(*,0,0,0)$ if $\charact k\ne 2$)\\\hline
$\E_8$&$i=2,3,4,5$&none\\\hline
$\E_8$&$i=6$&$J_2(G)=(0,*,*,*)$\\
&&($J_2(G)=(0,0,0,*)$ if $\charact k\ne 2$)\\\hline
$\E_8$&$i=1$&$J_2(G)=(0,0,*,*)$\\
&&($J_2(G)=(0,0,0,*)$ if $\charact k\ne 2$)\\\hline
$\E_8$&$i=7$&$J_3(G)=(0,*)$ and $J_2(G)=(0,*,*,*)$\\
&&($J_3(G)=(0,0)$ if $\charact k\ne 3$,\\
&&$J_2(G)=(0,0,0,*)$ if $\charact k\ne 2$)\\\hline
$\E_8$&$i=8$&$J_3(G)=(0,*)$ and $J_2(G)=(0,0,0,*)$\\
&&($J_3(G)=(0,0)$ if $\charact k\ne 3$)\\\hline
$\F$&$i=1,2,3$&none\\
$\F$&$i=4$&$J_2(G)=(0)$\\
$\G$&any $i$&none
\end{longtable}

{\footnotesize (``$*$'' means ``any value'').}
\end{thm}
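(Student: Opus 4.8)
The plan is to deduce each line of the table from Theorem~\ref{thm1}: since conditions (1) and (3) there are equivalent, it remains to make condition (3) explicit for every pair $(G_0,i)$. Fix such a pair, let $L$ be the Levi subgroup of $P_i$ and $H_0=[L,L]$. The first step is to identify $H_0$ \emph{together with its isogeny type}: its root system is $\Pi\setminus\{\alpha_i\}$, and whether $H_0$ is simply connected, half-spin, or an intermediate quotient is forced by the weight lattice of $G_0$ (which for $\G$, $\F$, $\E_8$ is the root lattice, so every such $H_0$ is simply connected with trivial Picard group). Concretely one gets $\mathrm{SL}_i\times\mathrm{SL}_{n-i}$ for $\PGL_n$; $\mathrm{SL}_i\times\mathrm{Sp}_{2(n-i)}$ for $\PGSp_{2n}$ when $i$ is odd but its quotient by a diagonal $\mu_2$ when $i$ is even; $\mathrm{SL}_n$ resp.\ $\mathrm{SL}_n/\mu_2$ for the fork nodes of $\PGO^+_{2n}$ according to the parity of $n$; the half-spin groups $\mathrm{Spin}_{2\ell}/\mu_2$, simply connected $\E_6$ and $\E_7$, and products with $\mathrm{SL}_m$ in the exceptional cases. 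Getting the isogeny type right is exactly what controls the degree-one part of the Chow rings, hence the $\ChO^1(Y)$-part of condition (3).

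Second, for each torsion prime $p$ of $G_0$ I would write down Kac's presentations \cite[Theorem~3.5]{Kc85} of $\Ch^*(G_0)$ and $\Ch^*(H_0)$, read off $\sigma$ by matching the degrees $e_m>1$ of the generators of $\Ch^*(H_0)$ with the degrees $d_{\sigma(m)}$ of generators of $\Ch^*(G_0)$, and record the conditions $j_{\sigma(m)}(G)=0$. For instance at $p=2$ a Levi factor $\mathrm{Spin}_{2\ell}/\mu_2$ contributes one generator in degree $1$ and generators in the odd degrees $3,5,9,\dots$ going up with $\ell$; collecting the resulting conditions over all $m$ with $e_m>1$ and all $p$ reproduces precisely the vanishing conditions on $J_p(G)$ in the table, with the bounds $\big[\frac{n+1-i}{2}\big]$ and $\big[\frac{n+2-i}{2}\big]$ being the number of generators of the orthogonal Levi factor lying in degree $>1$. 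The remaining $\ChO^1(Y)$-part I would handle via \cite[Proposition~4.2]{PS10} and the Remark after Theorem~\ref{thm1}: it says that a prescribed Tits algebra $A_l$ of $G$ splits, the only exceptional behaviour being the $\PGO^+_{2n}$ rows with a single degree-one generator, where two of $\w_1,\w_{n-1},\w_n$ map to the same class $y_1$ and the condition becomes ``$[A_{l_1}]=0$ or $[A_{l_2}]=0$'' for that pair --- the origin of the disjunctions ``$[A_{n-1}]=0$ or $[A_n]=0$'' etc.

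Third, the characteristic-free criterion obtained this way is already the first (weaker) condition printed in each box, so it remains to justify the parenthetical ``$\charact k\ne p$'' refinements. These come from the sharper information on which $J_p$-invariants actually occur that is available only in good characteristic --- that is, from the known implications ``$j_i(G)=0\Rightarrow j_{i+1}(G)=0$'' for groups of type $\E_7$, $\E_8$ at $p=2,3$, which rest on cohomological invariants and structure theory not at our disposal in characteristic $p$; over such fields the explicitly required components being zero automatically forces the later ones to vanish, whence the simpler stated form. As an independent check on the classical rows one can argue geometrically instead: the $\PGL_n$ answer $\gcd(\mathrm{exp}\,A_1,i)=1$ follows from the index-reduction formula for the generalized Severi--Brauer variety $\mathrm{SB}(i,A_1)$, and the $\PGSp_{2n}$ and orthogonal answers from the splitting behaviour of the underlying algebra-with-involution or quadratic form over the relevant function field.

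The main obstacle is not conceptual but the bulk of the case analysis, which is concentrated in $\E_7$ and $\E_8$: there $\Ch^*(G_0)$ has up to four generators at $p=2$ and two at $p=3$ (while the $5$-torsion of $\E_8$ never intervenes, no Levi factor having $5$-torsion in its Chow ring), the Levi factors range over $\mathrm{Spin}_{10},\mathrm{Spin}_{12},\mathrm{Spin}_{14}$ and over $\E_6$, $\E_7$ themselves, and one must simultaneously keep track of the isogeny type (for the Tits-algebra clause), of $\sigma$ across two primes, and of the characteristic-dependent vanishing. The delicate rows are $\E_7$ with $i=1,6,7$ and $\E_8$ with $i=1,6,7,8$, where a Tits-algebra clause and several $j$-invariant clauses occur together and interact with $\charact k$; assembling these entries correctly is where the real work lies.
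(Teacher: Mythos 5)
Your proposal is correct and follows essentially the same route as the paper: the paper's proof is simply the observation that the table follows from Theorem~\ref{thm1} combined with \cite[Table~4.13]{PSZ08}, which already records the Levi types, Kac presentations, the map $\sigma$, and the characteristic-dependent restrictions among the $j$-invariants that you propose to recompute from \cite[Theorem~3.5]{Kc85} and Steenrod-operation arguments. Your reading of the parenthetical refinements (as consequences of relations among the $j_m$ valid only for $\charact k\ne p$) and of the $\PGO^+_{2n}$ disjunctions via the Remark after Theorem~\ref{thm1} matches the intended derivation.
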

\begin{proof}
Follows immediately from Theorem~\ref{thm1} and \cite[Table~4.13]{PSZ08}.
\end{proof}

This theorem allows to give a shortened proof of the main result of
\cite{Ch10}:

\begin{cor}
Let $G$ be a group of type $\E_8$ over a field $k$ with $\charact k\ne 3$.
If the $3$-component of the Rost invariant of $G$ is zero, then $G$ splits over a field
extension of degree coprime to $3$.
\end{cor}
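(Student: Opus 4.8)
The plan is to reduce the statement to an application of Theorem~\ref{cor1} for the appropriate parabolic of type $i=8$, so I first need to locate a projective homogeneous variety whose generic splitting behaviour detects the vanishing of the $3$-component of the Rost invariant. The natural candidate is the variety $X$ of parabolic subgroups of type $8$ of (the adjoint group) $G$: by the last-but-one row of the table in Theorem~\ref{cor1}, since $\charact k\neq 3$ the $3$-primary part of the condition for $X$ to be generically split collapses to $J_3(G)=(0,0)$, i.e.\ the $3$-component $j_1(G)$ of the $J$-invariant vanishes. So the first step is to recall that for a group of type $\E_8$ the $3$-component of the Rost invariant is, up to a nonzero scalar, exactly the obstruction measured by $j_1(G)$ at the prime $3$ (this is the standard relation between the Rost invariant and the first $J$-invariant component, see \cite{PSZ08}); hence the hypothesis ``the $3$-component of the Rost invariant is zero'' gives precisely $J_3(G)=(0,0)$.

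Next I would invoke Theorem~\ref{cor1} in the direction (conditions on $G$)~$\Rightarrow$~($X$ generically split): with $\charact k\neq 3$ the only condition imposed on $G$ for the type-$8$ variety, as far as the prime $3$ is concerned, is $J_3(G)=(0,0)$, which we have; and whatever $2$-primary condition $J_2(G)=(0,0,0,*)$ appears is automatically implied because for $\E_8$ at the prime $2$ this is forced — or, more cleanly, one passes first to an odd-degree (in fact degree coprime to $6$, then coprime to $3$) extension that kills the $2$-part without affecting the Rost invariant's $3$-component, so that after this harmless extension all conditions in the relevant row hold. Thus $X$ becomes generically split over this extension.

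Finally, ``$X$ generically split'' by definition means that $G_{k(X)}$ has a parabolic subgroup of type $8$ whose Levi part is split; but more is true — the function field $k(X)$ is obtained by a sequence of ``coprime-to-$3$'' moves since $X$ has a $0$-cycle of degree coprime to $3$ whenever it is generically split at $3$ (the canonical dimension/splitting field of a generically split variety has degree controlled by $p$-primary data, and here the only prime in play is $3$). Concretely, one shows that $X$ has a closed point of degree prime to $3$: this follows from the $J$-invariant computation, because once $j_1(G)=0$ the motive of $X$ splits off a Tate motive over an extension of degree coprime to $3$. Over such a field $X$ has a rational point, hence $G$ has a parabolic of type $8$ with split Levi, and iterating (or invoking \cite[Proposition~3.9(3)]{PS10}) one concludes $G$ itself splits over a field extension of $k$ of degree coprime to $3$. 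The main obstacle is the bookkeeping in this last paragraph: one must check that the extension realising generic splitness can be chosen of degree coprime to $3$ (rather than merely of degree coprime to $3$ ``up to the $2$-part''), which is where the hypothesis $\charact k\neq 3$ and the structure of the $J$-invariant table at the prime $3$ are used decisively, together with the fact that passing to odd-degree extensions does not disturb the $3$-component of the Rost invariant.
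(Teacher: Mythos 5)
Your argument has a genuine gap, and it is essentially a circularity. You begin by asserting, as a ``standard relation,'' that the vanishing of the $3$-component of the Rost invariant gives $J_3(G)=(0,0)$. No such citable fact exists in the direction you need: the implication ``$3$-component of the Rost invariant is zero $\Rightarrow J_3(G)=(0,0)$'' is, in substance, the statement being proved (it is Chernousov's theorem repackaged via \cite[Proposition~3.9(3)]{PS10}). At best one could hope to relate the Rost invariant to the \emph{first} component $j_1$; you also need $j_2=0$, and that is exactly what the paper extracts from Theorem~\ref{cor1} and cannot be read off from the Rost invariant directly. Consequently you are running Theorem~\ref{cor1} in the wrong direction: you feed $J_3(G)=(0,0)$ in to conclude that $X$ is generically split, whereas the paper's proof first establishes that the variety $X$ of parabolics of type $7$ is generically split by a direct argument --- pass to $K/k$ of degree coprime to $3$ killing the $2$-component of the Rost invariant, observe that the semisimple anisotropic kernel of $G_{K(X)}$ has trivial Rost invariant and hence is split, so $G_{K(X)}$ is split --- and only then uses the implication (generically split) $\Rightarrow$ (conditions on $G$) of Theorem~\ref{cor1} to \emph{deduce} $J_3(G_K)=(0,0)$, finishing with \cite[Proposition~3.9(3)]{PS10}. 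Note also that if your opening claim were available, the detour through $X$ would be superfluous: you could apply Proposition~3.9(3) immediately.

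Two further points would sink the argument even granting the first step. The claim that $J_2(G)=(0,0,0,*)$ is ``forced'' for groups of type $\E_8$ at the prime $2$ is false; there are groups of type $\E_8$ with nontrivial $J_2$ in those components, so the $2$-primary condition in the row $i=8$ is a real hypothesis that must be arranged, not assumed. Your fallback --- passing to an extension of degree coprime to $3$ that ``kills the $2$-part'' --- is the right instinct (it is the paper's first move), but you would still have to explain why killing the $2$-component of the Rost invariant forces the required vanishing of $J_2$; the paper avoids this entirely because it never needs the (conditions) $\Rightarrow$ (generically split) direction. Finally, the last paragraph about $0$-cycles of degree coprime to $3$ on $X$ is unnecessary machinery: once $J_3(G)=(0,0)$ is in hand, \cite[Proposition~3.9(3)]{PS10} already yields a splitting field of degree coprime to $3$, with no motivic bookkeeping required.
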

\begin{proof}
Let $K/k$ be a field extension of degree coprime to $3$ such that
the $2$-component of the Rost invariant of $G_K$ is zero.

Consider the variety $X$ of parabolic subgroups of $G_K$ of type $7$.
The Rost invariant of the semisimple anisotropic kernel of $G_{K(X)}$ is zero.
Therefore $G_{K(X)}$ splits, and, thus, $X$ is generically split.

By Theorem~\ref{cor1} $J_3(G_K)=(0,0)$, hence by \cite[Proposition~3.9(3)]{PS10}
$G_K$ splits over a field extension of degree coprime to $3$. This implies
the corollary.
\end{proof}

\bibliographystyle{chicago}

\noindent
{\sc
V.~PETROV\\
Max-Planck-Institut f\"ur Mathematik, D-53111 Bonn, Germany}

\noindent
{\tt E-mail: victorapetrov@googlemail.com}

\medskip

\noindent
{\sc N.~SEMENOV\\
Johannes Gutenberg-Universit\"at Mainz, Institut f\"ur Mathematik, Staudingerweg 9,
D-55099 Mainz, Germany}

\noindent
{\tt E-mail: semenov@uni-mainz.de}
\end{document}